\documentclass[12pt]{amsart}
\oddsidemargin=0in
\evensidemargin=0in
\textwidth=6.50in             

\headheight=10pt
\headsep=10pt
\topmargin=.5in
\textheight=8in
\usepackage{amsmath, amssymb, amsthm}
\usepackage{comment, empheq}
\usepackage{mathdots, breqn}
\usepackage{graphicx}
\usepackage{hyperref}
\usepackage{mathtools}
\newtheoremstyle{myremark}{10pt}{10pt}{}{}{\scshape}{.}{.5em}{}
\newtheorem{theorem}{Theorem}
\newtheorem{lemma}{Lemma}

\theoremstyle{remark}
\theoremstyle{myremark}
\newtheorem*{remark}{Remark}
\newtheorem*{ackno}{Acknowledgements}

\newcommand{\R}{\mathbb{R}}
\renewcommand{\H}{\mathbb{H}}
\newcommand{\C}{\mathbb{C}}

\newcommand{\Z}{\mathbb{Z}}

\newcommand{\N}{\mathbb{N}}

\newcommand{\T}{\mathbb{T}}
\newcommand{\B}{\mathcal{B}}
\newcommand{\D}{\mathcal{D}}

\newcommand{\GL}{\mathrm{GL}}

\newcommand{\PGL}{\mathrm{PGL}}
\newcommand{\SL}{\mathrm{SL}}

\newcommand{\Lie}{\mathrm{Lie}}
\newcommand{\sgn}{\mathrm{sgn}} 
\newcommand{\Eis}{\mathrm{Eis}}
\newcommand{\supp}{\mathrm{supp}}

\usepackage{cleveref}
\title[Joint equidistribution]{Joint equidistribution on the product of the circle and the unit tangent bundle of the modular surface}
\author{Subhajit Jana}

\address{Max Planck Institute for Mathematics, Vivatgasse 7, 53111 Bonn, Germany.}
\email{subhajit@mpim-bonn.mpg.de}

\begin{document}

\begin{abstract}
    We use spectral method to prove a joint equidistribution of primitive rational points and the same along expanding horocycle orbits in the products of the circle and the unit cotangent bundle of the modular surface. This result explicates the error bound in a recent work of Einsiedler, Luethi, and Shah \cite[Theorem $1.1$]{ELS}. The error is sharp upon the best known progress towards the Ramanujan conjecture at the finite places for the modular surface. 
\end{abstract}

\maketitle

\section{Introduction}
Let $n\in \N$ be tending off to infinity. Let $G:=\PGL_2(\R)$, and $\Gamma:=\SL_2(\Z)$, and finally, let $\T$ be the one torus $\R/\Z$. We consider the set of points in $\T\times\Gamma\backslash G$
$$\mathcal{P}(n):=\left\lbrace\left(\frac{k}{n}, \begin{pmatrix}\sqrt{n^{-1}}&k\sqrt{n^{-1}}\\&\sqrt{n}\end{pmatrix}\right)\in \T\times\Gamma\backslash G: (k,n)=1 \right\rbrace.$$
The points $\begin{pmatrix}\sqrt{n^{-1}}&k\sqrt{n^{-1}}\\&\sqrt{n}\end{pmatrix}$ lie on the expanding horocycles in the unit cotangent bundle of the modular surface $\Gamma\backslash\H$ where $\H$ is the upper half plane.
A recent result of Einsiedler, Luethi, and Shah \cite[Theorem $1.1$]{ELS} shows an effective equidistribution of $\mathcal{P}(n)$ on $\T\times\Gamma\backslash G$. Their result states that there exist $\kappa>0$ such that for any test function $f\in C_c^\infty(\T\times\Gamma\backslash G)$ one has
\begin{equation}\label{els}
    \frac{1}{|\mathcal{P}(n)|}\sum_{k\in\mathcal{P}(n)}f(k)-\int_{\T\times\Gamma\backslash G} f(x) dx \ll_{f} n^{-\kappa},
\end{equation}
where $dx$ is the product of the probability Haar measures on $\T$ and $\Gamma\backslash G$ and the implied constant depends on $f$ through some Sobolev norm of $f$. However, the ergodic method used in \cite{ELS} to prove \eqref{els} is not sufficient to give a sharp lower bound of $\kappa$.

In this paper, using spectral theory of $\T\times\Gamma\backslash G$, we produce an explicit error bound, i.e. a lower bound of $\kappa$, in \eqref{els}. This bound is essentially the sharpest possible which one can obtain using the spectral methods and progress towards the Ramanujan conjecture at the finite places for $\Gamma\backslash G$. Below we describe our main theorem.

\begin{theorem}\label{main-theorem}
Let $f\in C_c^\infty(\T\times\Gamma\backslash G)$. Let $n$ be tending off to infinity on the natural numbers. Then for any $\delta>7/64$,
\begin{equation*}
    \frac{1}{\varphi(n)}\sum_{(k,n)=1}f\left[\frac{k}{n}, \begin{pmatrix}\sqrt{n^{-1}}&k\sqrt{n^{-1}}\\&\sqrt{n}\end{pmatrix}\right]=\int_{\T \times\Gamma\backslash G}f+O_f(n^{-1/2+\delta}),
\end{equation*}
where $\varphi$ is the Euler totient function. The implicit constant in the error term depends on $f$ through some fixed degree Sobolev norm of $f$ (see \S\ref{sec:sobolev-norm} for the definitions) and the support of the projection $f$ on the test functions on $\Gamma\backslash G$.\footnote{One can modify the definition of the Sobolev norm in \S\ref{sec:sobolev-norm} by ``height to the cusp'', as in \cite{MV}, so that the dependency on the support condition can be absorbed into the Sobolev norm of the test function.}
\end{theorem}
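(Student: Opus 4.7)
The plan is to spectrally decompose the test function on both factors of $\T \times \Gamma\backslash G$ and reduce the averaged sum to bounding twisted Weyl sums of Fourier coefficients of Hecke--Maass forms. Write $f(\theta,g) = \sum_{m \in \Z} e(m\theta) f_m(g)$ using Fourier series on the circle, and then spectrally expand each $f_m$ on $\Gamma\backslash G$ into constants, Hecke--Maass cuspforms $\phi_j$, and Eisenstein series $E(\cdot,\tfrac12+it)$. The pair consisting of $m=0$ and the constant component on $\Gamma\backslash G$ produces the main term $\int f$; every other spectral vector must be shown to contribute $O_f(n^{-1/2+\delta})$.

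For a cuspidal component $e(m\theta)\phi(g)$ with spectral parameter $t_\phi$, the matrix $\begin{pmatrix}\sqrt{n^{-1}} & k\sqrt{n^{-1}} \\ & \sqrt{n}\end{pmatrix}$ sends $i$ to $\tfrac{k}{n}+\tfrac{i}{n}$ in the upper half plane. Inserting the Whittaker expansion
\[
    \phi(x+iy) = \sqrt{y}\sum_{\ell \neq 0} \rho_\phi(\ell)\, K_{it_\phi}(2\pi|\ell| y)\, e(\ell x)
\]
and swapping the $k$- and $\ell$-sums, the Ramanujan identity $\sum_{(k,n)=1} e((\ell+m)k/n) = c_n(\ell+m)$ converts the average into
\[
    \frac{1}{\sqrt{n}\,\varphi(n)} \sum_{\ell \neq 0} \rho_\phi(\ell)\, K_{it_\phi}(2\pi|\ell|/n)\, c_n(\ell+m).
\]
The Bessel weight decays rapidly once $|\ell| \gg n(1+|t_\phi|)$, so the $\ell$-sum is effectively truncated. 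The dominant contribution comes from $\ell \equiv -m \pmod n$ with small quotient $j=(\ell+m)/n$: here $c_n(\ell+m)=\varphi(n)$, and with the Hecke normalization $|\rho_\phi(\ell)| \ll_\phi \tau(\ell)|\ell|^{\theta}$, applying Kim--Sarnak's bound $\theta \leq 7/64$ on the $O(1)$ leading terms with $\ell \asymp n$ yields a contribution of size $n^{-1/2+\theta+\varepsilon}$. Off-diagonal terms with $n\nmid \ell+m$ are handled via $|c_n(r)|\leq (r,n)$ and a short divisor sum of equal or better quality.

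The Eisenstein contribution is treated by the same mechanism, but the Fourier coefficients are essentially $\sigma_{2it}(|\ell|)|\ell|^{-it}$, so no Ramanujan deficit appears and the exponent there is strictly better than $1/2$. Uniformity in $m$, in cuspidal spectral parameters, and in the Eisenstein parameter $t$ is secured by using iterated Laplacians on $\T$ and on $\Gamma\backslash G$ to bound the spectral coefficients of $f$ in terms of Sobolev norms; this produces polynomial decay in $|m|+|t_\phi|$ and permits truncation of the effective spectrum at scale $n^{\varepsilon}$. The main obstacle I anticipate is establishing this uniformity rigorously across both the discrete and continuous spectrum while tracking the Bessel weights, and in particular controlling the off-diagonal $c_n(\ell+m)$ sums against $\ell$-ranges that grow with $|t_\phi|$; once this is in place, the Kim--Sarnak exponent is the binding constraint and the final error matches the claimed $\delta > 7/64$.
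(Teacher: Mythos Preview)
Your proposal is correct and follows essentially the same route as the paper: Fourier decomposition on $\T$, spectral decomposition on $\Gamma\backslash G$, Whittaker/Fourier expansion producing Ramanujan sums $c_n(\ell+m)$, the Kim--Sarnak bound $\theta\le 7/64$ as the binding constraint on the cuspidal side, and Sobolev norms to sum over the spectrum. The one point to adjust is that your explicit expansion with $K_{it_\phi}$ and $x+iy$ is specific to spherical (weight-zero) vectors, whereas the theorem is stated on $\Gamma\backslash G$ rather than $\Gamma\backslash\H$; the paper handles all $K$-types at once by working with abstract Whittaker functions $W_\phi$ and the uniform decay bound $W_\phi\bigl[\begin{smallmatrix}y&\\&1\end{smallmatrix}\bigr]\ll \min(|y|^{1/2-\epsilon},|y|^{-A})S_d(\phi)$ from \cite{MV}, which replaces your Bessel-weight truncation argument verbatim.
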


\begin{remark}
We provide a sketch for the proof of Theorem \ref{main-theorem}.
Note that, the Schwartz kernel theorem implies that to prove Theorem \ref{main-theorem} it is enough to prove the same for $f:=f_1\otimes f_2\in C^\infty_c(\T)\otimes C^\infty_c(\Gamma\backslash G)$
\begin{equation}\label{effective-equidistribution}
    \frac{1}{\varphi(n)}\sum_{(k,n)=1}f_1\left(\frac{k}{n}\right)f_2\left[\begin{pmatrix}\sqrt{n^{-1}}&k\sqrt{n^{-1}}\\&\sqrt{n}\end{pmatrix}\right]-\int_\T f_1\int_{\Gamma\backslash G}f_2\ll_{\supp(f_2)} n^{-1/2+\delta}S_{d_1}(f_1)S_{d_2}(f_2),
\end{equation}
for some Sobolev norms $S_{d_i}$ with fixed $d_i\ge 0$. See \S\ref{sec:sobolev-norm} for the definitions of Sobolev norms.
We will decompose $f_1$ into its Fourier expansion in $L^2(\T)$ and $f_2$ into its spectral expansion in $L^2(\Gamma\backslash G)$. We will first prove \eqref{effective-equidistribution} for $f_1$ being a Fourier mode $e_l:z\mapsto e(lz):=e^{2\pi i l z}$ for some integer $l$ and $f_2$ being a Hecke eigenform $\phi$, cuspidal or Eisenstein series. Then we will use spectral decomposition of $L^2(\T)\otimes L^2(\Gamma\backslash G)$ to combine the above results to prove Theorem \ref{main-theorem}.
\end{remark}

\begin{remark}
In \cite[Theorem 1.1]{ELS} authors, instead of $\mathcal{P}(n)$, considered the set
$$\left\lbrace\left(\frac{ak^d}{n}, \begin{pmatrix}\sqrt{n^{-1}}&bk^d\sqrt{n^{-1}}\\&\sqrt{n}\end{pmatrix}\right)\in \T\times\Gamma\backslash G: (k,n)=1 \right\rbrace,$$
for some $a,b,d\in\N$ and let $n$ to run along any sequence with $(ab,n)=1$. One can generalize Theorem \ref{main-theorem} without much hassle to incorporate general $a,b,d$ like \cite{ELS} and let $n$ to run along any sequence with $(ab,n)=(d,\varphi(n))=1$. However, to remove the restriction $(d,\varphi(n))=1$ we need an estimate of a type of higher degree Ramanujan sum of the form $\sum_{(k,n)=1}e_l(k^d/n)$ (see proof of Lemma \ref{proof-cusp-form}) which we do not focus on in this paper.
\end{remark}

\begin{remark}
In the proof we are using Selberg's theorem that the non-constant automorphic forms for $\SL_2(\Z)$ are tempered at infinity. In general, if we only use (e.g. in \eqref{whittaker-bound}) that the representations are $\vartheta$-tempered at infinity (i.e. the real parts of the Langlands parameters lie in $[-\vartheta,\vartheta]$) then the error term in Theorem \ref{main-theorem} will be of the order of $n^{-1/2+\max(7/64,\vartheta)+\epsilon}$. In particular, this also shows that the error depends on the spectral gap as implicitly stated in \cite{ELS}.
\end{remark}

\begin{remark}
Recently, Burrin--Shapira--Yu \cite{BSY} proved a result similar to Theorem \ref{main-theorem} but only on the modular surface, with the same error rate 
The proof techniques of the analogous theorem in \cite{BSY} on the modular surface are essentially similar to ours, however, \cite{BSY} only has to consider the spherical forms.
\end{remark}

\section{A Few Preliminaries}
Before we dive into the proof of Theorem \ref{main-theorem} we describe below the preliminary tools which we need during the proof.

\subsection{Eisenstein Series}\label{sec:eisenstein-series}
We briefly recall the theory of Eisenstein series on $\Gamma\backslash G$. 

For $s\in \C$ and $\delta\in\{0,1\}$ we denote the principal series representation 
$$I_{s,\delta}:=\mathrm{Ind}_{B}^G|.|^s\sgn^\delta\boxplus|.|^{-s}\sgn^\delta,$$ 
where $B$ is the Borel subgroup of $G$ of upper triangular matrices. Let $h$ be any holomorphic section in the induced representation $I_{s,\delta}$. So, for any $g\in G$ we have
\begin{equation}\label{transformation-principal-series}
    h\left[n(x)\begin{pmatrix}y&\\&1\end{pmatrix}g\right]=\sgn(y)^\delta|y|^{s}h(g),\quad y\in\R^\times,\quad n(x):=\begin{pmatrix}1&x\\&1\end{pmatrix}\in N,
\end{equation}
where $N$ is the unipotent subgroup $G$ of upper triangular matrices. 
If $\Re(s)=1/2$ then $I_{s,\delta}$ is unitary and a $G$-invariant inner product can be given by
$$\langle h_1,h_2\rangle:=\int_{K}h_1(k)\overline{h_2(k)}dk,$$
where $K:=\mathrm{SO}_2(\R)$ is the maximal compact in $G$ equipped with the probability Haar measure $dk$.

We define the Eisenstein series attached to $h$ by
\begin{equation}\label{first-defn-eis}
    \Eis(h)(g):=\sum_{\gamma\in \Gamma\cap N\backslash \Gamma}h(\gamma g),
\end{equation}
which is absolutely convergent for $\Re(s)>1$. One can meromorphically continue $\Eis(h)$ for all $s\in\C$.
Let $\xi(s):=\Gamma_\R(s)\zeta(s)$, where $\Gamma_\R(s):=\pi^{-s/2}\Gamma(s/2)$, be the completed zeta function. One can write the Fourier expansion 
$$\Eis(h)(g):
=\Eis(h)_0(g)+\sum_{m\neq 0}\frac{\lambda_s(m)}{\sqrt{|m|}}\sgn(m)^\delta W_h\left[\begin{pmatrix}m&\\&1\end{pmatrix}g\right],$$
where the Fourier coefficients are given by
\begin{equation}\label{fourier-coeff-e-series}
\lambda_s(m):=\frac{|m|^{1/2-s}\sigma_{2s-1}(|m|)}{\zeta(2s)},\quad \sigma_z(n):=\sum_{d\mid n}d^z.
\end{equation}
$W_h$ is the Whittaker function attached to $h$ which, for $\Re(s)>1/2$, is given by
$$W_h(g):=\int_{\R}h(wn(x)g)e(-x)dx, \quad w:=\begin{pmatrix}&-1\\1&\end{pmatrix};$$
and its analytic continuation to the whole complex plane.
$\Eis(h)_0$ is the constant term of the Eisenstein series which is given by
\begin{equation}\label{zeroth-coeff}
    \Eis(h)_0(g):=h(g)+\frac{\xi(2s-1)}{\xi(2s)}M_sh(g),
\end{equation}
where $M_s$ is the normalized intertwiner from $I_{s,\delta}$ to $I_{1-s,\delta}$ which, for $\Re(s)>1/2$, is given by,
$$M_sh(g):=\frac{\Gamma_\R(2s)}{\Gamma_\R(2s-1)}\int_{\R}h(wn(x)g)dx.$$
Again, the above has meromorphic continuation to the whole complex plane. It is known that $\Eis(h)$ is holomorphic if $h\in I_{s,\delta}$ with $\Re(s)=1/2$.

The theory of Fourier expansion of the Eisenstein series $\Eis(h)$ is widely available in the literature when $h$ is a spherical vector. If $h$ is a non-spherical vector then the above form of Fourier expansion maybe obtained from \cite[\S4.1.7]{MV}. However, for readers' convenience here we provide a short sketch of a proof of the Fourier expansion when 

By the abelian Fourier theory on $\Z\backslash\R$ we have
$$\Eis(h)(g)=\sum_{m\in\Z}\widehat{\Eis}(h)_m(g),$$
where 
$$\widehat{\Eis}(h)_m(g):=\int_{\Z\backslash\R}\Eis(h)(n(x)g)e(-mx)dx=\int_{n(x)\in\Gamma\cap N\backslash N}\Eis(h)(n(x)g)e(-mx)dx.$$
We let $\Re(s)$ to be sufficiently large so that we can write $\Eis(h)$ in the form as in \eqref{first-defn-eis}. We use Bruhat decomposition of $\Gamma$ to write
\begin{multline*}
    \widehat{\Eis}(h)_m(g)=\int_{\Z\backslash\R}h(n(x)g)e(-mx)dx\\+\sum_{c=1}^\infty\sum_{(d,c)=1}\int_{\R}h\left[n(\bar{d}/c)\begin{pmatrix}c^{-1}&\\&c\end{pmatrix}wn(d/c)n(x)g\right]e(-mx)dx.
\end{multline*}
Here $d\bar{d}\equiv 1\mod c$.
We use \eqref{transformation-principal-series} to obtain that the first term vanishes unless $m=0$, in which case the quantity equals to $h(g)$. Doing a few change of variables and using \eqref{transformation-principal-series} a couple of times we obtain that the second term equals to
$$\sum_{c=1}^\infty\frac{1}{c^{2s}}\int_{\R}h\left(wn(x)g\right)e(-x)dx$$
for $m=0$, and
$$\sgn(m)^\delta|m|^{s-1}\sum_{c=1}^\infty\frac{\sum_{(d,c)=1}e(dm/c)}{c^{2s}}\int_{\R}h\left[wn(x)\begin{pmatrix}m&\\&1\end{pmatrix}g\right]e(-x)dx$$for $m\neq 0$.
We recall a classical result that for $\Re(s)$ sufficiently large
$$\sum_{(d,c)=1}\frac{e(dm/c)}{c^{2s}}=\begin{cases}
\frac{\sigma_{2s-1}(|m|)}{|m|^{2s-1}\zeta(2s)},&\text{if }m\neq 0\\
\frac{\zeta(2s-1)}{\zeta(2s)},&\text{if }m=0.
\end{cases}$$
We conclude the sketch by analytic continuation.

The automorphic representation $\pi_{s,\delta}$ generated by $\Eis(h)$ for $h\in I_{s,\delta}$ with $\Re(s)=1/2$ is unitary and we have that $\langle \Eis(h_1),\Eis(h_2)\rangle_{\pi_{s,\delta}}$ is proportional to $\langle h_1,h_2\rangle$
where the proportionality constant depend only on $s$ and $\delta$.

\subsection{Sobolev norms}\label{sec:sobolev-norm}
Let $\{X_1,X_2,X_3\}$ be a basis of $\Lie(G)$. We define a Laplacian on $G$ by
$$\D:=1-X_1^2-X_2^2-X_3^2$$
For an irreducible unitary (local or automorphic) representation $\pi$ of $G$ we define a Sobolev norm on $\pi$ by
$$S_{d}(v):=\|\D^dv\|_{\pi}, \quad v\in \pi, d\in \N.$$
It is known that $\D$ is self-adjoint and positive definite on the unitary representations of $G$ and there exists a $d>0$ such that $\D^{-d}$ is of trace class, see \cite{NS}. We also need the following result.

Recall the principal series $I_{1/2+s,\delta}$ for $\Re(s)=0$. Note that, if $h\in I_{1/2+s,\delta}$ is an eigenfunction of $\D$ with eigenvalue $\nu_h$ then $\Eis(h)$ is also an eigenfunction of $\D$ with eigenvalue $\nu_h$. This implies that for each $d_1$ there is a $d_2$ and $d_3$ such that
$$S_{d_2}(h)\ll S_{d_1}(\Eis(h))\ll S_{d_3}(h).$$
Here in the first and third quantities the Sobolev norms are on the local representation $I_{1/2+s,\delta}$ while in the middle quantity the same is on the automorphic representation generated by $\Eis(h)$. From now on we will not specify whether $S_d$ is considered on a local or an automorphic representation and hope that it will be clear from the context.

\begin{lemma}\label{trace-class-property}
For every $d_0$ there exists a $d>d_0$ such that
$$\int_{(0)}(1+|s|)^{d_0}\sum_{h\in \B(I_{1/2+s,\delta})}\nu_h^{-d}ds\ll 1,$$
where $\B(I_{1/2+s,\delta})$ denotes the orthonormal basis of $I_{1/2+s,\delta}$ consisting of eigenfunctions of $\D$ and $\nu_h$ are the eigenvalues of $h\in\B(I_{1/2+s,\delta})$ under $\D$.
\end{lemma}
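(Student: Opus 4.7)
The plan is to exploit the $K$-type decomposition of the principal series $I_{1/2+s,\delta}$. If we choose $\{X_1, X_2, X_3\}$ to be orthonormal for an $\mathrm{Ad}(K)$-invariant inner product on $\Lie(G)$ with $X_1$ spanning $\Lie(K)$, then $\D = 1 - X_1^2 - X_2^2 - X_3^2$ is automatically $K$-invariant and so preserves the $K$-isotypic components. Since each $K$-type occurs in $I_{1/2+s,\delta}$ with multiplicity at most one, $\D$ acts as a scalar on each of them, and the orthonormal basis $\B(I_{1/2+s,\delta})$ may be taken to be the $K$-type basis $\{h_k\}_k$ indexed by integers $k$ (with parity determined by $\delta$).

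Next I would pin down the eigenvalue $\nu_{h_k}$. Using the identity $X_1^2 + X_2^2 + X_3^2 = 2X_1^2 + 2C$, where $C$ is a fixed nonzero multiple of the Casimir (a consequence of $C$ being the only $K$-invariant quadratic in $U(\g)$ up to $X_1^2$), one sees that on the $K$-type of weight $k$ inside $I_{1/2+s,\delta}$, $X_1$ acts by a purely imaginary scalar of absolute value $\asymp |k|$ and $C$ acts by a scalar of the form $c(s^2 - 1/4)$ with $c > 0$. Combining, on the unitary axis $s = it$ one obtains the uniform lower bound
$$\nu_{h_k} \gg 1 + k^2 + t^2.$$

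The remaining sum and integral are elementary. For $d > 1/2$, splitting at $|k| \asymp |t|$ and comparing sums to integrals gives
$$\sum_{h \in \B(I_{1/2+s,\delta})} \nu_h^{-d} \ll \sum_{k \in \Z}(1 + k^2 + t^2)^{-d} \ll (1+|t|)^{1-2d}.$$
Substituting this bound into the outer integral,
$$\int_{\R}(1+|t|)^{d_0}(1+|t|)^{1-2d}\,dt$$
converges as soon as $2d - d_0 - 1 > 1$, i.e.\ $d > d_0/2 + 1$. Consequently any choice $d > \max(d_0,\, d_0/2 + 1)$ produces the claimed uniform bound, and in particular satisfies $d > d_0$.

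The only non-routine step is the eigenvalue calculation in step two; the rest is counting lattice points and comparing sums to integrals. The expected obstacle is therefore bookkeeping: pinning down the exact constants in the formula for $\nu_{h_k}$ in a way consistent with a chosen normalisation of $\{X_1, X_2, X_3\}$, and verifying positivity of the Casimir coefficient so that $\nu_{h_k}$ genuinely grows in both $|k|$ and $|t|$. Once that is in place everything else is a short computation.
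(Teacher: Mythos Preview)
Your proposal is correct and follows essentially the same route as the paper: decompose $\D$ as a combination of the Casimir and the compact Casimir (the paper writes this as $\D=1-C_G+2C_K$), pass to the $K$-type basis of $I_{1/2+s,\delta}$, compute $\nu_{h_k}\asymp 1+|s|^2+k^2$, and reduce to the elementary convergence of $\int_{(0)}(1+|s|)^{d_0}\sum_{k}(1+|s|^2+k^2)^{-d}\,d|s|$. The only difference is cosmetic: you carry out the final $k$-sum and $t$-integral explicitly to extract the threshold $d>d_0/2+1$, whereas the paper simply asserts convergence for $d$ large enough.
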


\begin{proof}
We have $\D=1-C_G+2C_K$ where $C_H$ is the Casimir operator for the group $H$. This can be seen by choosing a basis $\{X,Y,W\}$ of $\Lie(G)$ such that $\{X,Y,W\}$ is orthogonal with respect to the Killing form and and $W$ is a basis of $\Lie(K)$. Then by the definition of the standard Cartan involution one has $C_G=X^2+Y^2-W^2$ and $C_K=-W^2$ and the claim follows.

So $h$, being an eigenvector of $\D$, is also an eigenvector of $C_K$. Thus we may choose $\B(I_{1/2+s,\delta})$ to be a $K$-type orthonormal basis. Then for $h$, a $k$-type for $k\in\Z$, we calculate that (see \cite[\S2.6]{Bum})
$$\nu_h \asymp 1+ |s|^2 +k^2.$$
Thus the integral in the statement is bounded by
$$\int_{(0)}(1+|s|)^{d_0}\sum_{k\in\Z}(1+|s|^2+k^2)^{-d}d|s|.$$
The above is convergent for large enough $d>0$.
\end{proof}

If $\Re(s)=0$ and $I_{1/2+s,\delta}\ni h$ is a principal series representation then for a fixed compact set $\Omega\subset G$ we have
\begin{equation}\label{sobolev-estimate}
\|h\|_{L^{\infty}(\Omega)}, \|M_sh\|_{L^{\infty}(\Omega)} \ll_\Omega S_d(h),
\end{equation}
for some fixed $d$. The first inequality follows by the classical Sobolev embedding and the second inequality follows from \cite[\S4.1.8]{MV}.

We also define a Sobolev norm on $C^\infty_c(\T)\otimes C^\infty_c(\Gamma\backslash G)$ by
$$S_{d_1,d_2}(f_1\otimes f_2):=S_{d_1}(f_1)S_{d_2}(f_2):=\|\partial^{d_1}f_1\|_2\|\D^{d_2}f_2\|_2,\quad d_1,d_2\in \Z_{\ge 0},$$
where $\partial$ is the differential operator $d/dx$ on $\R$ and $\|.\|_2$ denotes the $L^2$-norm on the corresponding space with respect to its invariant probability measure.

\subsection{Spectral decomposition}
We describe the pointwise spectral decomposition of $\Psi\in C_c^\infty(\Gamma\backslash G)\subset L^2(\Gamma\backslash G)$, see \cite[\S2.2.1]{MV}.
\begin{equation}\label{spectral-decomposition}
    \Psi(g)=\langle \Psi,1\rangle +\sum_{\pi\textrm{ cuspidal}}\sum_{\phi\in \B(\pi)}\frac{\langle \Psi,\phi\rangle\phi(g)}{\|\phi\|^2}+\int_{(0)}\sum_{\delta\in\{0,1\}}\sum_{h\in \B(I_{1/2+s,\delta})}\frac{\langle \Psi,\Eis(h)\rangle \Eis(h)(g)}{\|h\|^2}\frac{ds}{4\pi i},
\end{equation}
$\langle,\rangle$ is the usual $L^2$ inner product on $\Gamma\backslash G$ and $\B(\sigma)$ denotes an orthogonal basis of $\sigma$. We may choose $\B(\sigma)$ to contain eigenforms of the full Hecke algebra. The RHS of \eqref{spectral-decomposition} does not depend on the choices of bases and converges absolutely and uniformly on compacta.

\subsection{Whittaker functions}
Let $\pi$ be a cuspidal representation for $\SL_2(\Z)$ and $\phi\in\pi$ be a Hecke eigenform. Then $\phi$ has a Fourier expansion. We normalize $\phi$ so that the first Fourier coefficient of $\phi$ is one. We can write
\begin{equation}\label{fourier-expansion}
    \phi(g)=\sum_{m\neq 0}\frac{\lambda_\pi(m)}{\sqrt{|m|}}W_\phi\left[\begin{pmatrix}m&\\&1\end{pmatrix}g\right],
\end{equation}
where $W_\phi$ is the Whittaker function attached to $\phi$ defined by
$$W_\phi(g):=\int_0^1\phi(n(x)g)e(-x)dx,$$
and $\lambda_\pi(m)$ are the Hecke eigenvalues attached to $\pi$.

Let $W_v$ be the Whittaker function attached to $v$ which is either a cusp form $\phi$ or a unitary Eisenstein series $\Eis(h)$ attached to a principal series vector $h\in I_{s,\delta}$ with $\Re(s)=1/2$. Then $W_v$ is smooth if $v$ is a smooth vector and satisfies
\begin{equation}\label{unipotent-equivariance}
    W_v(n(x)g)=e(x)W_v(g),\quad n(x)\in N.
\end{equation}
$W_v$ also has the following decay property:
\begin{equation}\label{whittaker-bound}
    W_v\left[\begin{pmatrix}y&\\&1\end{pmatrix}\right]\ll_A \min(|y|^{1/2-\epsilon},|y|^{-A}) S_d(v),
\end{equation}
where $S_d(v)$ is the Sobolev norm defined in \S\ref{sec:sobolev-norm} and $d$ depends only on $A$. 

This result follows from \cite[Proposition $3.2.3$]{MV}. There the vector $W_v$ is considered to lie in a local generic representation $\sigma$ of $G$ and the corresponding Sobolev norm, which we name $S^\sigma$, is defined with respect to the Kirillov model of $\sigma$. In our case, when $v=\phi\in\pi$, a cusp form, we use that $\pi$ is isomorphic to its Whittaker model $\mathcal{W}(\pi)$ with respect to the character $N\ni n(x)\mapsto e(x)$ and 
$$\|\phi\|^2_\pi\asymp L(1,\mathrm{Ad},\pi)\|W_\phi\|^2_{\mathcal{W}(\pi)}.$$
Using the standard bounds of $L(1,\mathrm{Ad},\pi)$ (e.g. polynomials in the parameters of $\pi$) we conclude that for each $d_1$ there is a $d_2$ such that $S^\sigma_{d_1}(W_\phi)\ll S_{d_2}(\phi)$. Thus the bound in \eqref{whittaker-bound} follows from \cite[Proposition $3.2.3$]{MV}. Similarly, when $v=h$ a unitary principal series vector the same argument works using the isomorphism of the representation and its Kirillov model.

Also note that, we are using the fact, due to Selberg, that the non-constant automorphic forms for $\SL_2(\Z)$ are tempered at the archimedean place which implies that $\vartheta=0$ in \cite[Proposition $3.2.3$]{MV}.

\section{Cuspidal Spectrum} 
In this section we prove \eqref{effective-equidistribution} for $f_1$ being the Fourier mode $e_l$ for $l\in\Z_{\ge 0}$ and $f_2$ being a cuspidal Hecke eigenform $\phi$ lying in a cuspidal representaion $\pi\subseteq L^2(\Gamma\backslash G)$.

We arithmetically normalize $\phi$ i.e. normalize so that its first Fourier coefficient is one. Then the Fourier coefficients $\lambda_\pi(m)$ satisfy the following bound, see \cite{KS},
\begin{equation}\label{hecke-eigenvalue-bound}
    \lambda_\pi(m)\ll_\epsilon |m|^{7/64+\epsilon},
\end{equation}
where the implied constant is uniform in $\phi$. If $\pi$ is a discrete series then the above bound can be improved to
$$\lambda_\pi(m)\ll_\epsilon |m|^\epsilon,$$
which follows from Deligne's result.

\begin{lemma}\label{proof-cusp-form}
Let $l\in\Z_{\ge 0}$ and $\phi$ be an arithmetically normalized cuspidal Hecke eigenform on $\Gamma\backslash G$. Then
$$\frac{1}{\varphi(n)}\sum_{(k,n)=1}e_l\left(\frac{k}{n}\right)\phi\left[\begin{pmatrix}1&\frac{k}{n}\\&1\end{pmatrix}\begin{pmatrix}\sqrt{n^{-1}}&\\&\sqrt{n}\end{pmatrix}\right]\ll_\epsilon n^{-1/2+7/64+\epsilon}S_d(\phi),$$
for some fixed $d$.
\end{lemma}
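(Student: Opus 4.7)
The plan is to open up $\phi$ via its Whittaker expansion \eqref{fourier-expansion}, commute the unipotent factor $n(k/n)$ past $\diag(m,1)$ using \eqref{unipotent-equivariance}, and recognize the resulting $k$-sum as a Ramanujan sum. Explicitly, writing $g_{k,n}=n(k/n)\diag(\sqrt{n^{-1}},\sqrt{n})$ and using that $G=\PGL_2(\R)$ is centerless so $\diag(m\sqrt{n^{-1}},\sqrt{n})$ and $\diag(m/n,1)$ give the same element of $G$, we have
$$\phi(g_{k,n})=\sum_{m\neq 0}\frac{\lambda_\pi(m)}{\sqrt{|m|}}\,e\!\left(\frac{mk}{n}\right)W_\phi\!\left[\begin{pmatrix}m/n&\\&1\end{pmatrix}\right].$$
Averaging over $(k,n)=1$ against $e_l(k/n)$ the $k$-sum collapses to the Ramanujan sum
$$c_n(m+l):=\sum_{(k,n)=1}e\!\left(\frac{(m+l)k}{n}\right),\qquad |c_n(m+l)|\le \gcd(n,m+l).$$

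Next I would estimate the resulting spectral expansion
$$\frac{1}{\varphi(n)}\sum_{m\neq 0}\frac{\lambda_\pi(m)}{\sqrt{|m|}}W_\phi\!\left[\begin{pmatrix}m/n&\\&1\end{pmatrix}\right]c_n(m+l)$$
by inserting the archimedean Whittaker bound \eqref{whittaker-bound} and the Hecke bound \eqref{hecke-eigenvalue-bound}. Writing $y=m/n$, the combined factor $|\lambda_\pi(m)||m|^{-1/2}|W_\phi(\diag(y,1))|$ is $\ll S_d(\phi)n^{-1/2+\epsilon}|m|^{7/64}$ in the range $|m|\le n$ where $|y|^{1/2-\epsilon}$ dominates, and it is negligibly small for $|m|\gg n$ because one may take $A$ arbitrarily large. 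So the essential contribution comes from $|m|\le n$ and is bounded by
$$\frac{S_d(\phi)}{\varphi(n)\,n^{1/2-\epsilon}}\sum_{|m|\le n}|m|^{7/64}\gcd(n,m+l).$$

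To finish, I would handle the $\gcd$-sum by the standard identity $\gcd(n,m+l)=\sum_{d\mid \gcd(n,m+l)}\varphi(d)$, which gives
$$\sum_{|m|\le N}\gcd(n,m+l)\ll N\,\tau(n)+n\ll N n^\epsilon+n,$$
and then decompose $|m|\in[1,n]$ dyadically, i.e. $|m|\sim N=2^j$. Each dyadic block contributes $\ll S_d(\phi)\varphi(n)^{-1}n^{-1/2+\epsilon}N^{7/64}(Nn^\epsilon+n)$, and summing the geometric series in $N\le n$ together with $\varphi(n)\gg n^{1-\epsilon}$ yields the claimed bound $\ll_\epsilon n^{-1/2+7/64+\epsilon}S_d(\phi)$. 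The range $|m|>n$ is treated identically using rapid Whittaker decay $W_\phi\ll_A (|m|/n)^{-A}$, where one dyadic piece $|m|\sim N\ge n$ contributes $\ll S_d(\phi)\varphi(n)^{-1}(N/n)^{-A}N^{7/64-1/2+\epsilon}\cdot Nn^\epsilon$, geometric in $N$ for $A$ large enough, again giving the same bound.

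The only point requiring real care will be the sum $\sum|m|^{7/64}\gcd(n,m+l)$ over small $|m|$, where a single $\gcd$ can be as large as $n$ itself; this is exactly where the Pillai-type bound $\sum_{m\le N}\gcd(n,m+l)\ll Nn^\epsilon+n$ (rather than the trivial $N\cdot n$) is needed to avoid losing an extra factor of $n$. Otherwise the argument is a direct application of the Whittaker decay, the Kim--Sarnak bound, and elementary Ramanujan-sum/divisor estimates.
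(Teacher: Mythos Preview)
Your argument is correct and follows the same strategy as the paper: expand $\phi$ via its Whittaker series \eqref{fourier-expansion}, collapse the $k$-sum to the Ramanujan sum $c_n(m+l)$, and then feed in the Kim--Sarnak bound \eqref{hecke-eigenvalue-bound} together with the archimedean Whittaker decay \eqref{whittaker-bound}. The only difference is bookkeeping in the gcd-sum: the paper uses the exact formula $r_n(m')=\mu(n/(n,m'))/\varphi(n/(n,m'))$ and sorts the $m$-sum by divisors $d\mid n$ with $m\equiv -l\pmod d$, whereas you use $|c_n(m+l)|\le\gcd(n,m+l)$, a Pillai-type average $\sum_{|m|\le N}\gcd(n,m+l)\ll Nn^\epsilon+n$, and a dyadic split in $|m|$; both routes yield the same $n^{-1/2+7/64+\epsilon}$.
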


\begin{proof}
Using \eqref{fourier-expansion} and \eqref{unipotent-equivariance} we obtain that the LHS of the estimate in the lemma is
\begin{equation}\label{lhs-for-cusp-form}
    \sum_{m\neq 0}\frac{\lambda_\pi(m)}{\sqrt{|m|}}W_\phi\left[\begin{pmatrix}m/n&\\&1\end{pmatrix}\right]r_n(m+l),
\end{equation}
where $r_n$ is the normalized Ramanujan sum defined by
$$r_n(m'):=\frac{1}{\varphi(n)}\sum_{(a,n)=1}e(am'/n).$$
It is a classical result that
\begin{equation*}
    r_n(m')=\frac{\mu(n/(n,m'))}{\varphi(n/(n,m'))},
\end{equation*}
where $\mu$ is the M\"obius function. We rearrange the absolutely convergent sum in \eqref{lhs-for-cusp-form} and estimate by
$$\le \sum_{d\mid n}\frac{1}{\varphi(n/d)}\sum_{m\equiv -l\mod d}\frac{|\lambda_\pi(m)|}{\sqrt{|m|}}\left|W_\phi\left[\begin{pmatrix}m/n&\\&1\end{pmatrix}\right]\right|.$$
We write $m=dk-l$, apply \eqref{hecke-eigenvalue-bound}, and \eqref{whittaker-bound} with $v=\phi$ and $A=1/2+7/64+2\epsilon$ to obtain that the above sum is 
\begin{multline*}
\ll \sum_{d\mid n}\frac{1}{\varphi(n/d)}\left[\sum_{k:|k-l/d|<1}|dk-l|^{-1/2+7/64+\epsilon}(|dk-l|/n)^{1/2-\epsilon}\right.\\+\left.\sum_{k:|k-l/d|\ge1}|dk-l|^{-1/2+7/64+\epsilon}(|dk-l|/n)^{-1/2-7/64-2\epsilon}\right].
\end{multline*}
Among the inner sums above the first one is $\ll n^{-1/2+7/64+\epsilon}$ and the second one is $\ll n^{1/2+7/64+2\epsilon}d^{-1-\epsilon}$. We use that $\varphi(n/d)\gg (n/d)^{1-\epsilon}$ to estimate that the above is
$$\ll_\epsilon n^{-1/2+7/64+\epsilon}\sum_{d\mid n}d^{-\epsilon},$$
hence the result follows.
\end{proof}

\section{Continuous Spectrum}
Now we focus on the continuous spectrum. We recall the notations developed in \S\ref{sec:eisenstein-series}.

\begin{lemma}\label{proof-e-series}
Let $l\in\Z_{\ge 0}$ and $h\in I_{1/2+s,\delta}$ with $\Re(s)=0$. Then 
$$\frac{1}{\varphi(n)}\sum_{(k,n)=1}e_l\left(\frac{k}{n}\right)\Eis(h)\left[\begin{pmatrix}1&\frac{k}{n}\\&1\end{pmatrix}\begin{pmatrix}\sqrt{n^{-1}}&\\&\sqrt{n}\end{pmatrix}\right]\ll_\epsilon n^{-1/2+\epsilon}S_d(h)(1+|s|)^\epsilon,$$
for some fixed $d$.
\end{lemma}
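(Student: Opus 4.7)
The plan parallels Lemma~\ref{proof-cusp-form}, with two modifications: in place of the Kim--Sarnak bound for cuspidal Hecke eigenvalues, one uses the divisor-function bound on $\lambda_{1/2+s}(m)$, and one must additionally handle the constant term $\Eis(h)_0$. Writing $a := \diag(n^{-1/2}, n^{1/2})$, I would expand $\Eis(h)[n(k/n)\, a]$ via its Fourier series as $\Eis(h)_0[n(k/n)\,a]$ plus the non-constant piece over $m \neq 0$. Unipotent equivariance \eqref{unipotent-equivariance} and the $\PGL_2$-identity $\diag(m,1)\, a = \diag(m/n, 1)$ convert the non-constant part into a sum of the Ramanujan-weighted shape analogous to \eqref{lhs-for-cusp-form}.

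For the constant term, \eqref{zeroth-coeff} and the left $N$-invariance of $h$ and $M_{1/2+s} h$ eliminate $n(k/n)$, while \eqref{transformation-principal-series} yields $h(a) = n^{-1/2-s} h(1)$ and $M_{1/2+s} h(a) = n^{-1/2+s} M_{1/2+s} h(1)$. The resulting $k$-independent quantity, once averaged against $e(lk/n)$, is multiplied by $r_n(l)$, which is bounded by $1$. The functional equation and the reality of $\xi$ combine to give $|\xi(2s)/\xi(1+2s)| = 1$ on $\Re(s) = 0$, and the Sobolev embedding \eqref{sobolev-estimate} gives $|h(1)|, |M_{1/2+s} h(1)| \ll S_d(h)$. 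Hence the constant-term contribution is $\ll n^{-1/2} S_d(h)$, comfortably within the target.

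For the non-constant part, on $\Re(s) = 0$ the Fourier coefficient formula \eqref{fourier-coeff-e-series} combined with the trivial divisor bound $|\sigma_{2s}(|m|)| \le d(|m|) \ll_\epsilon |m|^\epsilon$ and the classical lower bound $|\zeta(1+2s)|^{-1} \ll \log(2+|s|)$ yields $|\lambda_{1/2+s}(m)| \ll |m|^\epsilon (1+|s|)^\epsilon$. The sum then reads
\[
\sum_{m \neq 0} \frac{\lambda_{1/2+s}(m)}{\sqrt{|m|}} \sgn(m)^\delta W_h[\diag(m/n,1)]\, r_n(m+l),
\]
which I estimate in lockstep with the cuspidal proof: decompose by $d = (n, m+l) \mid n$, write $m = dj - l$, apply \eqref{whittaker-bound} with $A = 1/2 + 2\epsilon$ (without the $7/64$ loss), and split the inner sum at $|m| \asymp n$. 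Both ranges contribute $\ll n^{1/2+\epsilon}/d \cdot (1+|s|)^\epsilon S_d(h)$; dividing by $\varphi(n/d) \gg (n/d)^{1-\epsilon}$ and summing over $d \mid n$ gives the claimed $n^{-1/2+\epsilon}(1+|s|)^\epsilon S_d(h)$.

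The only genuinely new step beyond the cuspidal argument is controlling the constant term, which yields easily to the combination of $N$-invariance, the scaling in $a$, and the Sobolev bound on $M_{1/2+s} h$ from \eqref{sobolev-estimate}; consequently I do not anticipate any real obstacle.
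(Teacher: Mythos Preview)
Your proposal is correct and follows essentially the same path as the paper: Fourier-expand $\Eis(h)$, bound the constant-term contribution via \eqref{zeroth-coeff}, \eqref{sobolev-estimate}, and the functional equation of $\xi$, and handle the non-constant terms exactly as in Lemma~\ref{proof-cusp-form} with the divisor bound $\lambda_{1/2+s}(m)\ll_\epsilon |ms|^\epsilon$ in place of the Kim--Sarnak bound. The only cosmetic difference is that the paper splits the inner $m$-sum at $|k-l/d|<1$ (i.e.\ $|m|<d$) rather than at $|m|\asymp n$, but both choices lead to the same final estimate.
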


\begin{proof}
We proceed as in the proof of Lemma \ref{proof-cusp-form}. Using the Fourier expansion of $\Eis(h)(g)$ in \S\ref{sec:eisenstein-series}, description of the constant term \eqref{zeroth-coeff}, and \eqref{transformation-principal-series} we obtain that the LHS of the estimate in the lemma is
\begin{multline}\label{lhs-for-e-series}
    n^{-1/2}\left(n^{-s}h(1)+n^{s}\frac{\xi(2s)}{\xi(1+2s)}M_{1/2+s}h(1)\right)r_n(l)\\
    +\sum_{m\neq 0}\frac{\lambda_{1/2+s}(m)}{\sqrt{|m|}}\sgn(m)^\delta W_h\left[\begin{pmatrix}m/n&\\&1\end{pmatrix}\right]r_n(m+l).
\end{multline}

We start with the second summand in \eqref{lhs-for-e-series}. Recall the  For $\Re(s)=0$ we have the standard estimates
\begin{equation*}
    \sigma_{2s}(m)\ll m^\epsilon,\quad \zeta(1+2s)^{-1}\ll_\epsilon |s|^\epsilon,
\end{equation*}
which imply that
\begin{equation}\label{fourier-coeff-bound}
    \lambda_{1/2+s}(m)\ll_\epsilon {|ms|^\epsilon}.
\end{equation}
We proceed similarly as in the proof of Lemma \ref{proof-cusp-form} but use \eqref{fourier-coeff-bound} instead of \eqref{hecke-eigenvalue-bound}. We obtain that 
\begin{equation}\label{lhs-3}
    \sum_{m\neq 0}\frac{\lambda_{1/2+s}(m)}{\sqrt{|m|}}\sgn(m)^\delta W_h\left[\begin{pmatrix}m/n&\\&1\end{pmatrix}\right]r_n(m+l)\ll_{s,\epsilon} n^{-1/2+\epsilon}S_d(h)(1+|s|)^\epsilon,
\end{equation}
for some fixed $d$.

Now we focus on the first quantity of \eqref{lhs-for-e-series}. From the functional equation of the Riemann zeta function one gets that
$$\frac{\xi(2s)}{\xi(1+2s)}=\frac{\xi(1-2s)}{\xi(1+2s)}\asymp 1,$$
for $\Re(s)=0$. Using \eqref{sobolev-estimate} we obtain
$$\left(n^{-s}h(1)+n^{s}\frac{\xi(2s)}{\xi(1+2s)}M_{1/2+s}h(1)\right)r_n(l)\ll S_d(h),$$
for $\Re(s)=0$ and some fixed $d$. Hence we conclude.
\end{proof}

\section{Proof of Theorem \ref{main-theorem}}

Let $f_1\in C^\infty_c(\T)$ with the Fourier expansion
$$f_1(x)=\sum_{l\in \Z}\hat{f}_1(l)e_l(x)=\int_{\T} f_1+\sum_{l\neq 0}\hat{f}_1(l)e_l(x).$$
Then,
$$\frac{1}{\varphi(n)}\sum_{(k,n)=1}f_1(k/n)=\int_{\T} f_1+\sum_{l\neq 0}\hat{f}_1(l)r_n(l).$$
We have a rapid decay of the Fourier coefficients of $f_1$ i.e.
\begin{equation}\label{rapid-decay-fourier}
    \hat{f}_1(l)\ll_{A}(1+|l|)^{-A}S_{A}(f_1).
\end{equation}
We use that 
$$r_n(l)\ll \varphi(n/(n,l))^{-1}\ll_{\epsilon} (n/l)^{-1+\epsilon},$$
and \eqref{rapid-decay-fourier}
to obtain that
\begin{equation}\label{constant-term-torus}
    \frac{1}{\varphi(n)}\sum_{(k,n)=1}f_1(k/n)=\int_{\T} f_1+n^{-1+\epsilon}S_d(f_1),
\end{equation}
for some fixed $d$.

\begin{proof}[Proof of Theorem $1$]
We will prove \eqref{effective-equidistribution}.
Recall spectral decomposition \eqref{spectral-decomposition} and apply to $f_2$. Note that, $\langle f_2,1\rangle=\int_{\Gamma\backslash G}f_2$. 

If $\pi$ is cuspidal then we choose $\B(\pi)$ consisting of eigenfunctions of $\D$ which are also Hecke eigenfunctions. We integrate by parts with respect to $\D$ several times and then apply Cauchy--Schwarz to obtain
\begin{equation}\label{rapid-decay-cusp}
    \langle f_2,\phi\rangle \ll_{d, f_2} |\nu_\phi|^{-A}\|\phi\| S_A(f_2),
\end{equation}
for any $A>0$. Here $\nu_\phi$ is the eignevalue of $\phi$ under $\D$ and we are using that
$S_d(\phi)=\|\D^d\phi\|=|\nu_\phi|^d\|\phi\|$.

Similarly, for $\Re(s)=0$ we choose $\B(I_{1/2+s,\delta})$ consisting of an orthogonal eigenbasis of $\D$. Clearly, $\Eis(h)$ is an eigenfunction of $\D$ with the same eigenvalue as of $h$ under $\D$. We first integrate by parts with respect to $\D$ in the integral of $\langle f_2,\Eis(h)\rangle$. Then we use the Fourier expansion of $\Eis(h)$ as in \S\ref{sec:eisenstein-series}, use \eqref{sobolev-estimate}, and work as in the proof of Lemma \ref{proof-e-series} to obtain
$$\Eis(h)\mid_{\supp(f_2)}\ll_{\supp(f_2)} S_d(h)(1+|s|)^\epsilon.$$
Hence we deduce that
\begin{equation}\label{rapid-decay-eis}
    \langle f_2,\Eis(h)\rangle \ll_{\epsilon,\supp(f_2)} |\nu_h|^{-A}S_{A'}(f_2)\|h\| (1+|s|)^\epsilon,
\end{equation}
for any $A>0$ and some $A'$ depending only on $A$. Here $\nu_h$ is the eigenvalue of $h$ under $\D$.

Finally, using \eqref{spectral-decomposition}, Lemma \ref{proof-cusp-form}, Lemma \ref{proof-e-series}, and \eqref{constant-term-torus} we obtain that
\begin{multline*}
    \frac{1}{\varphi(n)}\sum_{(k,n)=1}f_1\left(\frac{k}{n}\right)f_2\left[\begin{pmatrix}1&\frac{k}{n}\\&1\end{pmatrix}\begin{pmatrix}\sqrt{n^{-1}}&\\&\sqrt{n}\end{pmatrix}\right]-\int_\T f_1\int_{\Gamma\backslash G}f_2\\
    \ll_{\epsilon,\supp(f_2)} n^{-1+\epsilon} S_d(f_1)\|f_2\|_{2}+n^{-1/2+7/64+\epsilon}\sum_{l\in\Z}\hat{f}_1(l)\sum_{\pi\textrm{ cuspidal}}\sum_{\phi\in \B(\pi)}\frac{|\langle f_2,\phi\rangle|}{\|\phi\|}|\nu_\phi|^d\\
    +n^{-1/2+\epsilon}\sum_{l\in\Z}\hat{f}_1(l)\int_{(0)}(1+|s|)^{\epsilon}\sum_{\delta\in\{0,1\}}\sum_{f\in \B(I_{1/2+s,\delta})}\frac{|\langle f_2,\Eis(h)\rangle|}{\|h\|}|\nu_h|^d d|s|,
\end{multline*}
for some fixed $d$. We apply \eqref{rapid-decay-cusp} and \eqref{rapid-decay-eis} to the automorphic spectral integrals and apply \eqref{rapid-decay-fourier} to the $l$-sum with large enough $A$. We confirm that the above sums and integrals are absolutely convergent by the trace class property of $\D^{-A}$ and Lemma \ref{trace-class-property}. We conclude the proof by noting the dependencies on the Sobolev norms of $f_1\otimes f_2$ from \eqref{rapid-decay-cusp}, \eqref{rapid-decay-eis}, and \eqref{rapid-decay-fourier}.
\end{proof}

\begin{ackno}
We thank Manfred Einsiedler, Manuel Luethi, and Paul Nelson for several helpful conversations and feedback on an earlier draft. We acknowledge the anonymous referee for carefully reading the manuscript and suggesting several improvements. We also thank ETH Z\"urich where the work was mostly done while the author was a doctoral student there.
\end{ackno}

\end{document}